\documentclass[12pt]{article}
\title{The pair correlation function of the
$\Sine_{6}$ process}
\usepackage{palatino}
\date{}
\author{Shengqi Qiu, Yahui Qu, Lingfan Yuan, Benedek Valk\'o,
Spencer Venancio}

\usepackage{amsmath, amsthm, amssymb}
\usepackage{graphicx}
\usepackage{amsmath, enumerate}
\usepackage{color, url}

\usepackage{hyperref}
    \newtheorem{theorem}{Theorem}
    \newtheorem{lemma}[theorem]{Lemma}
    \newtheorem{proposition}[theorem]{Proposition}

\theoremstyle{definition} 

\newcommand{\Z}{{\mathbb Z}}

\newcommand{\R}{{\mathbb R}}
\newcommand{\CC}{{\mathbb C}}

\newcommand{\lstar}{{\raise-0.15ex\hbox{$\scriptstyle \ast$}}}

\theoremstyle{remark} 
\newcommand{\Sineb}{\operatorname{Sine}_{\beta}}
\newcommand{\Sine}{\operatorname{Sine}}

\definecolor{violet}{rgb}{0.8,0,0.2}

\newcommand{\sinc}{\operatorname{sinc}}

\definecolor{darkgreen}{rgb}{.2,0.4,0.3}

\begin{document}
\maketitle

\begin{abstract}
We derive an explicit formula for the pair correlation function of the $\Sine_6$ process
in terms of Bessel functions of the first kind. This provides the first single-variable special function representation of the pair correlation function for the bulk limit of a beta-ensemble beyond the classical values of $\beta=1,2,$ and $4$. 
\end{abstract}

\section{Introduction}

The size $n$ Gaussian beta-ensemble is given by the joint density 
\begin{align}
  p_{n,\beta}(\lambda_1,\dots, \lambda_n)=\frac{1}{Z_{n,\beta}} \prod_{1\le j<k\le n} |\lambda_j-\lambda_k|^{\beta} \prod_{k=1}^n e^{-\frac{\beta}{4}\lambda_k^2},\qquad \lambda_k\in \R, \label{eq:GbE}
\end{align}
where $Z_{n,\beta}$ is an explicit constant \cite{ForBook}. The bulk scaling limit of the Gaussian $\beta$-ensemble is the $\Sineb$ process, a translation invariant point process with density $\frac{1}{2\pi}$ (see \cite{BVBV}, \cite{BVBV_sbo}). $\Sineb$ was shown to be the same as the point process limit of the related circular beta-ensemble (\cite{KS}, \cite{Nakano_2014}). The work \cite{BEY} proved that $\Sineb$ is the bulk scaling limit of a wide family of general beta-ensembles.

For $\beta=1,2,$ and $4$, the Gaussian beta-ensemble gives the joint eigenvalue density of Gaussian Orthogonal/Unitary/Symplectic Ensembles. It is a classical result that these are determinantal (for $\beta=2$) and Pfaffian (for $\beta=1$ and $4$) ensembles: the $k$-point correlation functions can be expressed in terms of $k\times k$ determinants and Pfaffians. These properties are inherited by the point process limits, which leads to explicit formulas for the $k$-point correlation functions $\rho_\beta^{(k)}(\lambda_1,\dots, \lambda_k)$ of $\Sine_\beta$ for $\beta=1,2,$ and $4$. (See e.g.~\cite{AGZ, ForBook}.) 

By translation invariance the pair correlation function satisfies $\rho_{\beta}^{(2)}(x,y)=\rho_{\beta}^{(2)}(0,y-x)$. Let $\sinc(x)=\frac{\sin(x)}{x}$. Then for the classical $\beta$ values, the pair correlation is given by 
\begin{align}
\rho_{1}^{(2)}(0,\lambda)&=\frac{1}{4\pi^2}\left(1- \sinc^2(\lambda/2) + \sinc'(\lambda/2)\left(\int_0^{\lambda/2} \sinc(t)dt-\tfrac{\pi}{2} \operatorname{sgn}(\lambda)\right)\right),\\
    \rho_{2}^{(2)}(0,\lambda)&=\frac{1}{4\pi^2}\left(1-\sinc^2(\lambda/2)\right),\\
    \rho_{4}^{(2)}(0,\lambda)& = \frac{1}{4\pi^2}\left(1- \sinc^2(\lambda) + \sinc'(\lambda)\int_0^{\lambda} \sinc(t)dt\right).
\end{align}
Our main result is an explicit formula for $ \rho_{6}^{(2)}(0,\lambda)$, the pair correlation function of the $\Sine_6$ process. Recall that $J_\nu$ is the Bessel function of the first kind. In what follows, all fractional powers $\lambda^\alpha$ are defined using the principal branch on $\CC\setminus (-\infty,0]$. 

\begin{theorem}\label{thm:main}
Introduce the functions 
\begin{align}\label{eq:def_h_b}
h(\lambda)&=e^{3 i \lambda }-\left(1+3i \lambda-\tfrac{9}{2} \lambda ^2-\tfrac{9}{2}  i \lambda ^3\right),\qquad 
b(\lambda)
=\frac{20 i \pi   }{27 } e^{-\frac{3 i \lambda }{2}} \lambda^{-5/3} h(\lambda),\\
      \label{eq:r_pm}
        r_{+}(\lambda)&=\lambda^{-1/6}(J_{1/6}(\lambda/2)-i J_{7/6}(\lambda/2)), \qquad  r_{-}(\lambda)=\lambda^{-1/6}(J_{-1/6}(\lambda/2)+i J_{-7/6}(\lambda/2)),\\ \label{eq:def-l-pm}
         \ell_\pm(\lambda)
 &=\frac{1}{40}e^{3i\lambda/2}
 \lambda^{-2}
 \Bigl(
(6i\lambda^2-6\lambda)r_\pm'(\lambda)+(3\lambda^2+9i\lambda-8)r_\pm(\lambda)
 \Bigr).
\end{align}
Then for $\lambda\in \R_+$ we have
\begin{multline}
\label{eq:mainresult}
    \rho_{6}^{(2)}(0,\lambda)=\frac{1}{4\pi^2}\Re\bigg\{1-\frac{4}{27}\lambda^{-4}h(\lambda) +   \ell_+(\lambda) \int_0^\lambda b(s) r_{-}(s) ds-\ell_-(\lambda)
\int_0^\lambda b(s) r_{+}(s) ds
    \bigg\}.
\end{multline}

\end{theorem}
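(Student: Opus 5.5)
We would derive the formula from a known representation of the two-point function of $\Sineb$ for even $\beta$. The plan is to start from the circular $\beta$-ensemble: for $\beta=2k$, the pair correlation of C$\beta$E$_n$ can be written as a $2k$-fold (here $\beta=6$, so a $6$-fold) integral via the duality of Forrester, obtained through Jack polynomial theory. Its $n\to\infty$ bulk limit gives $\rho^{(2)}_\beta(0,\lambda)$ as a constant times
\[
\lambda^{-?}\, e^{-i\lambda\cdot\text{const}}\int_{[0,1]^{\beta}} e^{i\lambda\sum_j t_j}\prod_j t_j^{2/\beta-1}(1-t_j)^{2/\beta-1}\prod_{j<k}|t_j-t_k|^{4/\beta}\,dt.
\]
Equivalently, it is a generalized hypergeometric function ${}_1F_1^{(\beta/2)}$ of $\beta$ equal arguments $i\lambda$. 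For $\beta=6$ this is a ${}_1F_1$ with parameter $\alpha=3$ in $6$ variables, all equal to $i\lambda$.

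The key step is to reduce this multivariate hypergeometric function, evaluated at equal arguments, to a single-variable ODE. Functions of the form ${}_1F_1^{(\alpha)}(a;c;x,\dots,x)$ satisfy a linear ODE in $x$ of order $m$ (the number of variables). This follows from the Yan/Kaneko holonomic system restricted to the diagonal, or from the Forrester–Rains approach via a Fuchsian-type system. For $m=6$ the ODE has order $6$. We would then exploit the special parameter values $2/\beta=1/3$ and look for a factorization of this order-$6$ operator. The form of the claimed answer suggests the structure: an elementary solution $h(\lambda)$, a Bessel-type order-$2$ factor with solutions $r_\pm$ (Bessel functions of order $\pm1/6$, $\pm7/6$ at $\lambda/2$, times $e^{\pm 3i\lambda/2}$ and powers), and variation of parameters producing the integrals $\int_0^\lambda b\,r_\mp$. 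Concretely, we would verify that the operator $L$ factors as $L=L_2 L_4$, or similar, where $L_4$ annihilates the polynomial-times-exponential pieces of $h$ and $\lambda^{-4}$. The inhomogeneous second-order problem $L_2 u = b$ is then solved by the Wronskian formula. The Wronskian of $r_+,r_-$ is explicit from $J_\nu J_{-\nu+1}+J_{-\nu}J_{\nu-1}=2\sin(\nu\pi)/(\pi x)$, which accounts for constants like $\pi$ and $\tfrac{20}{27}$. The functions $\ell_\pm$ then arise as the first-order differential operator that maps the solution of the reduced problem back to $\rho$.

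Finally we would fix the solution within the solution space using asymptotics. As $\lambda\to0$, we need $\rho_6^{(2)}(0,\lambda)\sim c\lambda^{6}$, with vanishing of all lower orders. This is consistent with $h(\lambda)=O(\lambda^4)$, $\lambda^{-4}h\to$ const, and the integrals starting at $0$. As $\lambda\to\infty$, we need $\rho\to 1/(4\pi^2)$. These conditions, plus the requirement that $\rho$ is real (hence the $\Re$), determine the remaining constants.

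The main obstacle is the explicit factorization of the sixth-order operator and the identification of the Bessel factor. Here we would first compute the operator symbolically from the Kaneko system, then find the exponential/polynomial right factor by searching for solutions of the form $e^{ci\lambda}p(\lambda)$, and reduce the quotient to Bessel form by substituting $u=e^{3i\lambda/2}\lambda^{\gamma}w$. Matching the normalization constant in the limit formula is routine but error-prone, and the small-$\lambda$ expansion provides a check.
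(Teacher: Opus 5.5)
There is a genuine gap. Your plan leaves the whole content of the theorem, the reduction to a second-order Bessel-type problem, to a search for a factorization, and the structure you prescribe for that search is wrong.

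\emph{The order is off.} Already for one variable, Kummer's ${}_1F_1$ needs a second-order equation. Here the large-$\lambda$ expansion of $\rho_6^{(2)}(0,\lambda)$ contains the seven frequencies $e^{ik\lambda}$, $|k|\le 3$: all four vertices $\mathbf{x}_0,\dots,\mathbf{x}_3$ in Section \ref{sec:large_lambda} contribute. So any rational-coefficient equation for it, or for the six-fold integral, has order at least $7$, matching the order $2n+1$ mentioned in the introduction.

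\emph{The real part is structural.} That real equation mixes two conjugate pieces: $4\pi^2\rho_6^{(2)}(0,\lambda)=\Re F(\lambda)=\tfrac12\bigl(F(\lambda)+F(-\lambda)\bigr)$ with $F=1+2\mathbf{v}^\top\mathbf{q}$. It is the complex fourth-order equation for $F$, with frequencies $e^{ik\lambda}$, $0\le k\le 3$, that carries the Bessel/elementary structure. So the $\Re$ in \eqref{eq:mainresult} has to be built in from the start by isolating $F$; it is not a condition that fixes constants at the end. The paper gets $F$, its $3\times 3$ first-order system, and the initial data directly from Theorem \ref{thm:correlation}. It also states that it knows no direct reduction from \eqref{eq:paircorr_For}.

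\emph{The elementary factor is on the wrong side; this is the decisive problem.} Write $D=d/d\lambda$. The operator in \eqref{eq:ODE_q3} is $(\lambda D+4-3i\lambda)\circ L_2$, where $L_2$ is the Bessel-type operator on the left of \eqref{eq:q3_A}. The elementary function $\tfrac{160}{27}\lambda^{-4}h$ is the value $L_2q_3$, i.e.\ the forcing, not a solution.

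With your $L=L_2L_4$ and elementary $\ker L_4$, the function $L_4f$ would lie in $\ker L_2$. Variation of parameters would then put elementary functions in front of the integrals and Bessel functions only inside them, the opposite of $\ell_\pm\int_0^\lambda b\,r_\mp$. Moreover, a search for solutions $e^{ci\lambda}p(\lambda)$ comes up empty: the third-order operator for $q_3$ has no nonzero solutions of this kind.

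A left factor corresponds instead to an elementary solution of the \emph{adjoint} equation, and this is the missing idea. Lemma \ref{lem:reduction} produces $\mathbf{z}_n=e^{-in\lambda}\lambda^{n+1}\mathbf{y}_n$, with Laguerre-polynomial entries, solving the adjoint system. This yields the first integral $\mathbf{z}_n^\top\mathbf{q}_n=\tfrac{n+1}{n}\int_0^\lambda s^{-1}\mathbf{z}_n^\top\mathbf{e}_n\,ds$, which for $n=3$ is exactly \eqref{eq:q3_A}.

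\emph{Fixing constants.} The constants are then fixed locally at $0$. Entireness of $q_3$ gives $r(0)=r'(0)=0$, which kills $c_\pm$ because $r_-\sim\lambda^{-4/3}$ and $r_+$ tends to a nonzero constant. The limit at $\infty$ only normalizes the non-oscillating solution, since all the competing solutions decay there.
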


Since  $\rho_6^{(2)}(0,\lambda)$ is an even function  of $\lambda$ (and equal to 0 for $\lambda=0$),  equation \eqref{eq:mainresult} determines $\rho_6^{(2)}(0,\lambda)$ for all $\lambda\in \R$. Although this may not be apparent from the formula, the function  $\rho_{6}^{(2)}(0,\lambda)$ can be extended to an entire function of $\lambda\in \CC$, see Theorem \ref{thm:correlation} below.

With Bessel function identities one can express  the functions $\ell_{+}, \ell_{-}$ in terms of the functions $J_{\pm 1/6},J_{\pm 7/6}$ (see \eqref{eq:Bessel_identities} below). Hence, the expression \eqref{eq:mainresult} can be written in terms of sine, cosine, power, and Bessel functions, using a single integration.

For $\beta=2n, n\in \Z_+$, Forrester  provided a representation for the correlation functions of all orders for the circular beta-ensemble in terms of generalized hypergeometric functions and Jack polynomials, and also evaluated the scaling limit of these functions (see \cite{Forrester_1992}, \cite{Forrester_1994}, and Chapter 13 of \cite{ForBook}). The recent results of \cite{AN2026} show that these limits are equal to the correlation functions of the $\Sineb$ process. For the pair correlation function,  Forrester's formula gives 
  \begin{align}\notag
 \rho_{2n}^{(2)}(0,\lambda)=&\frac1{4\pi^2} \cdot \frac{n^{2n} (n!)^3 }{(2n)!(3n)! } \frac{e^{-i n \lambda} \lambda^{2n}}{S_{2n}(-1+1/n,-1+1/n, 1/n)}\times\\
 &\qquad \int\limits_{[0,1]^{2n}} \prod_{j=1}^{2n} \left(e^{i \lambda u_j} u_j^{-1+1/n} (1-u_j)^{-1+1/n} \right)\prod_{j<k}|u_j-u_k|^{2/n} \prod_{j=1}^{2n}du_j .\label{eq:paircorr_For_2n}
\end{align}
Here $S_{2n}(-1+1/n,-1+1/n, 1/n)$ is the Selberg integral on $[0,1]^{2n}$ that one obtains by setting $\lambda=0$ for the integral in  \eqref{eq:paircorr_For_2n}. This integral  can be  evaluated explicitly in terms of the $\Gamma$ function. In particular, for $\beta=6$, one obtains the following expression:
\begin{align}
& \rho_{6}^{(2)}(0,\lambda)=\frac{\Gamma \left(\frac{1}{3}\right)^6}{5120 \pi ^6} e^{-3 i \lambda } \lambda ^6 \int\limits_{[0,1]^{6}} \prod_{j=1}^{6} \left(e^{i \lambda u_j} u_j^{-2/3} (1-u_j)^{-2/3} \right)\prod_{j<k}|u_j-u_k|^{2/3} \prod_{j=1}^{6}du_j.\label{eq:paircorr_For}
\end{align}
 We are not aware of a direct way to reduce \eqref{eq:paircorr_For}  to our expression in Theorem \ref{thm:main}.

Our proof relies on the results of \cite{QV2025}, where a novel characterization of $\rho_{2n}^{(2)}(0,\lambda)$ was given in terms of a complex-valued $n$-dimensional system of differential equations. When $n=3$, this leads to a third-order differential equation. We are able to reduce this equation to a second-order differential equation, which yields the explicit formula stated in  Theorem \ref{thm:main}. In Section \ref{sec:reduction} we show that a similar reduction works for all $n$:  the pair correlation function  of $\Sine_{2n}$ can be expressed 
in terms of the solution of a complex-valued ODE of order $n-1$.
Note that \cite{Forrester_20212} provides a method for constructing a real-valued ODE of order $2n+1$ with   $\rho_{2n}^{(2)}(0,\lambda)$ as a solution.

We note that \cite{QV2025} also provides a characterization of the pair correlation function of $\Sineb$ for general $\beta>0$ in terms of the characteristic function of a certain random variable. The recent results of Assiotis and Najnudel \cite{AN2026} provide a different characterization of the correlation functions of all orders for $\Sineb$ in terms of functionals of a particular random entire function. 

To our knowledge, our result is the first explicit single-variable  special function representation of the bulk pair correlation function for a non-classical $\beta$ value. 
It is interesting to note that similar results do exist in the soft edge scaling limit. 
In \cite{Rumanov_2015} Rumanov derived a representation for the Tracy-Widom$_\beta$ distribution (the scaling limit of the largest point in the Gaussian $\beta$-ensemble) for $\beta=6$ in terms of the Painlev\'e II equation. See also \cite{GIKM2016} and \cite{IP2020} for related results. 
\smallskip

The integral representation \eqref{eq:paircorr_For_2n} can be used to find the asymptotic behavior of $\rho_{2n}^{(2)}(0,\lambda)$ as $\lambda\to \infty$, see Proposition 13.13  of \cite{ForBook}. 
We record the following result which provides a minor improvement  for the $\beta=6$ case.
\begin{proposition}
\label{prop:rho6_asymptotics}
As \(\lambda\to\infty\), we have
\begin{align}
    4\pi^2 \rho_6^{(2)}(0,\lambda)=1-\frac{2}{3 \lambda^2} +\Gamma\left(\tfrac13\right)^2\, \left(\frac29 \frac{\cos \lambda}{\lambda^{2/3}}-\frac{16}{81}\frac{\sin \lambda}{\lambda^{5/3}}-\frac{64}{729} \frac{\cos \lambda}{\lambda^{8/3}} +\frac{8}{81} \frac{\cos(2\lambda)}{\lambda^{8/3}} \right)+\mathcal{O}(\lambda^{-11/3}).
    \label{eq:rho6_largelambda}
\end{align}
\end{proposition}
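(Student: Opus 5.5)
We will derive the asymptotics directly from the explicit formula \eqref{eq:mainresult} of Theorem~\ref{thm:main}, treating each term separately as $\lambda\to+\infty$.

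\textbf{Elementary terms.} The term $-\frac{4}{27}\lambda^{-4}h(\lambda)$ is explicit. Its real part contributes $-\frac{4}{27}\cdot\frac{9}{2}\lambda^{-2}=-\frac{2}{3}\lambda^{-2}$, which accounts for the $-\frac{2}{3\lambda^2}$ term. The remaining pieces are either $O(\lambda^{-3})$ and imaginary or $O(\lambda^{-4})$, so they land in the error term.

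\textbf{Bessel factors.} Next we expand $r_\pm$ and $\ell_\pm$ using the Hankel expansions of $J_{\pm1/6}(\lambda/2)$ and $J_{\pm7/6}(\lambda/2)$. The combinations $J_\nu\mp iJ_{\nu+1}$ are chosen so that $r_\pm(\lambda)$ behaves like $\lambda^{-2/3}$ times a combination of $e^{\pm i\lambda/2}$ times series in $1/\lambda$. Since the prefactor of $\ell_\pm$ is $\lambda^{-2}(\cdots)$ with a degree-$2$ polynomial, $\ell_\pm$ is essentially $\frac{1}{40}e^{3i\lambda/2}(6i r'_\pm+3r_\pm)$ plus lower-order terms. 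We will track two or three orders in the Hankel expansion; the error term $\lambda^{-11/3}$ fixes exactly how many.

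\textbf{Integrals.} For $\int_0^\lambda b(s)r_\mp(s)\,ds$ we write
\[
\int_0^\lambda = \int_0^\infty - \int_\lambda^\infty .
\]
The integrand $b(s)r_\mp(s)$ is a sum of terms $s^{-5/3-1/6}\cdot s^{k}$ times oscillatory exponentials $e^{i s(\pm3/2\pm1/2)}$. Among these there are non-oscillating polynomial pieces, coming from $-e^{-3is/2}(1+3is-\dots)$, whose growth looks problematic. The plan is as follows.
\begin{itemize}
\item Split $h$ into its exponential part $e^{3is/2}$ and its polynomial part $e^{-3is/2}P(s)$.
\item Evaluate the full integrals $\int_0^\infty s^{\alpha}e^{i c s}J_\nu(s/2)\,ds$ in closed form via Weber--Schafheitlin / Mellin-type formulas, using analytic continuation in $\alpha$ to regularize the divergent pieces. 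This is where constants like $\Gamma(1/3)^2$ arise, through $\Gamma(1/6)$, $\Gamma(7/6)$ and duplication/reflection.
\item Obtain the tails $\int_\lambda^\infty$ by repeated integration by parts against the Hankel expansions.
\end{itemize}
The precise claim to aim for is
\[
\ell_+(\lambda)\int_0^\lambda b\,r_- - \ell_-(\lambda)\int_0^\lambda b\,r_+ = \text{(constant)}\cdot\lambda^{-2/3}e^{\pm i\lambda}\cdot(\text{series}) + \text{tail terms},
\]
where the tail terms combine with $\ell_\pm$ to give non-oscillating and $e^{2i\lambda}$ contributions. Taking real parts should then produce the $\cos\lambda$, $\sin\lambda$ and $\cos 2\lambda$ terms.

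\textbf{Main obstacle.} The hardest step is organizing the cancellations. The individual products $\ell_\pm\cdot\int$ contain growing or non-decaying non-oscillatory pieces, for example $1$ from the tails, which must cancel against or combine with the $1$ and with each other. The final constant term must equal exactly $1$, and the $\lambda^{-1},\lambda^{-5/3},\dots$ non-oscillatory terms must vanish. We would first perform the computation symbolically to verify that these cancellations occur, using the Wronskian of $r_+$ and $r_-$, which are solutions of the same second-order ODE, to simplify the products. Only then would we extract the constants from the $\int_0^\infty$ values, which determine the $\Gamma(1/3)^2$ coefficients.

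As a consistency check, the leading $\frac{2}{9}\Gamma(1/3)^2\cos\lambda/\lambda^{2/3}$ term should agree with Proposition~13.13 of \cite{ForBook}.
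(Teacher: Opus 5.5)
Your plan follows the second route the paper sketches: Hankel expansions of $\ell_\pm$ plus an asymptotic analysis of $\int_0^\lambda b\,r_\mp$. You replace the Poisson representation \eqref{eq:poisson-bessel} and endpoint analysis by $\int_0^\lambda=\int_0^\infty-\int_\lambda^\infty$, which is fine provided the same regularization is used for both pieces. However, there is a gap: your account of where the terms come from is wrong, and the step that produces every $\Gamma(\frac13)^2$ coefficient is not actually planned.

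\textbf{Where the terms come from.} Write $\int_0^\lambda b\,r_\mp=C_\mp-T_\mp(\lambda)$ with $T_\mp(\lambda)=\int_\lambda^\infty b\,r_\mp$.
\begin{itemize}
\item Since $(r_+T_--r_-T_+)'=r_+'T_--r_-'T_+$, the tail part $\ell_+T_--\ell_-T_+$ is the operator in \eqref{eq:def-l-pm} applied to $R=r_+T_--r_-T_+$.
\item $-R$ is the particular solution of \eqref{eq:ODE_r} whose expansion contains only $e^{\pm 3i\lambda/2}$ times integer powers of $\lambda$. Homogeneous components, which carry $\lambda^{-2/3-k}e^{-i\lambda/2}$ and $\lambda^{-5/3-k}e^{i\lambda/2}$, cannot occur.
\item So the tails give only non-oscillating terms and an $e^{3i\lambda}\lambda^{-4}$ term. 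The non-oscillating terms have real part $\mathcal{O}(\lambda^{-4})$; the leading one is imaginary, of order $\lambda^{-1}$. The $e^{3i\lambda}\lambda^{-4}$ term cancels the one in $-\frac{4}{27}\lambda^{-4}h$.
\item There is no ``$1$ from the tails.'' The $\mathcal{O}(1)$ pieces of the individual products oscillate like $e^{-i\lambda}$ and cancel in the antisymmetric combination. The tails also produce no $e^{2i\lambda}$ terms.
\end{itemize}
Hence all four $\Gamma(\frac13)^2$ terms come from $\ell_+C_--\ell_-C_+$:
\begin{itemize}
\item The dominant parts of $r_+$ and $r_-$ are $\frac{2}{\sqrt\pi}e^{i\pi/3}\lambda^{-2/3}e^{-i\lambda/2}$ and $\frac{2}{\sqrt\pi}e^{i\pi/6}\lambda^{-2/3}e^{-i\lambda/2}$. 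They give the $e^{i\lambda}$ terms, starting with $\frac{3}{10\sqrt\pi}\bigl(e^{i\pi/3}C_--e^{i\pi/6}C_+\bigr)\lambda^{-2/3}e^{i\lambda}$.
\item The $\cos 2\lambda$ term comes from the subdominant $\lambda^{-5/3}e^{i\lambda/2}$ components of $r_\pm$. The leading part $6ir'+3r$ of $\ell_\pm$ annihilates these, leaving $\lambda^{-8/3}e^{2i\lambda}$.
\end{itemize}

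\textbf{Computing the constants.} Everything therefore rests on computing $C_\pm$ exactly, and ``Weber--Schafheitlin plus duplication/reflection'' does not do it.
\begin{itemize}
\item Each piece is $\int_0^\infty e^{-ps}s^{\mu-1}J_\nu(s/2)\,ds$ at $p=\mp\frac{3i}{2}$. This equals $\frac{\Gamma(\nu+\mu)}{\Gamma(\nu+1)}(4p)^{-\nu}p^{-\mu}\,{}_2F_1\bigl(\frac{\nu+\mu}{2},\frac{\nu+\mu+1}{2};\nu+1;\frac19\bigr)$.
\item The same argument $\frac19=(\frac12/\frac32)^2$ appears in the Weber--Schafheitlin form, and no Gamma-function identity evaluates ${}_2F_1$ at $\frac19$.
\item The missing idea is the quadratic transformation ${}_2F_1(\frac a2,\frac{a+1}{2};b+\frac12;\frac19)=(\frac34)^a\,{}_2F_1(a,b;2b;\frac12)$. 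For $\nu\in\{\pm\frac16,\pm\frac76\}$ and your values of $\mu$, contiguous relations then reduce everything to elementary values and classical evaluations at $\frac12$, such as Bailey's theorem or $B_{1/2}(a,a)=\frac12B(a,a)$. This is, for instance, what gives $B_{1/9}(\frac16,\frac13)=\frac12B(\frac16,\frac13)$.
\item For $r_-$, several pieces sit exactly at poles of $\Gamma(\nu+\mu)$, because the entire function $b\,r_-$ has been split into pieces that are not integrable at $0$. These poles cancel only in the sum, so an honest finite-part computation is needed. In it, $\gamma$ and $\log\frac32$ cancel, but multiples of $\pi$ coming from $\log(\mp\frac{3i}{2})$ survive.
\end{itemize}
Without these evaluations your argument gives the shape of \eqref{eq:rho6_largelambda} but none of its $\Gamma(\frac13)^2$ coefficients.

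The paper's primary argument avoids this by working with \eqref{eq:paircorr_For}. There the leading vertex constants $c_\ell,d_\ell$ are taken from \cite{ForBook}, and only the next-order corrections at $\mathbf{x}_0$ and $\mathbf{x}_1$ must be computed.
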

In Section \ref{sec:large_lambda} we provide a short outline of how these asymptotics can be derived from   \eqref{eq:paircorr_For} using  the methods of Chapter 13 of \cite{ForBook}. We  also explain how \eqref{eq:rho6_largelambda}  can be obtained  from Theorem \ref{thm:main} as well. 
The paper
\cite{QV2025} provides a decay estimate for $\rho_{\beta}^{(2)}(0,\lambda)$ for general $\beta>0$. For $\beta=6$ the provided bound is   $|4\pi^2 \rho_6^{(2)}(0,\lambda)-1|\le c \lambda^{-2/3}$ for large $\lambda$. Recently, Dumaz and Malvy \cite{DM2026}  provided decay estimates for  correlation functions of all orders for $\Sineb$ in the long-range regime.  \medskip

The integral representation \eqref{eq:paircorr_For_2n} implies that as $\lambda\to 0$, we have
\begin{align}
    \rho_{2n}^{(2)}(0,\lambda)=  \frac1{4\pi^2} \cdot \frac{n^{2n} (n!)^3 }{(2n)!(3n)! }\lambda^{2n}+\mathcal{O}(\lambda^{2n+2}).
\end{align}
The results of \cite{QV2025} provided an alternate derivation of this statement, and gave a full Taylor-series expansion for $ \rho_{2n}^{(2)}(0,\lambda)$, see Theorem \ref{thm:correlation} below. The results of \cite{AN2026} imply that for general $\beta>0$ the pair correlation function $\rho_{\beta}^{(2)}(0,\lambda)$ behaves like $c_\beta |\lambda|^\beta$ as $\lambda\to 0$, with an explicitly computable $c_\beta$.\smallskip

\noindent\textbf{Acknowledgments.} B.V.~was partially supported by  the University of Wisconsin – Madison Office of the Vice Chancellor for Research and Graduate Education with funding from the Wisconsin Alumni Research Foundation and by the National Science Foundation award DMS-2246435. The paper grew out of an undergraduate research project within the framework of the Madison Experimental Mathematics Lab (\href{https://mxm.math.wisc.edu}{MXM}) during the Spring 2026 semester. The authors would like to thank Caglar Uyanik and Grace Work, the director and associate director of MXM. The authors thank Peter Forrester for valuable comments.

\section{Preliminaries}\label{sec:prelim}

We start by reviewing of some of the results from \cite{QV2025} on the pair correlation function of the $\Sineb$ process with $\beta=2n$, $n\in \Z_{+}$.  Let $\mathbf{A}_{n}$ be an $n\times n$ tridiagonal matrix  and $\mathbf{B}_n$ an $n\times n$  diagonal matrix with nonzero entries given by 
\begin{align}\label{eq:AB}
      [\mathbf{A}_n]_{k,k}=-k^2, \quad [\mathbf{A}_n]_{k,k-1}=\frac12 k(k+n), \quad [\mathbf{A}_n]_{k,k+1}=\frac12 k(k-n),\quad [\mathbf{B}_n]_{k,k}=k.
\end{align}
Let $\mathbf{e}_n, \mathbf{f}_n, \mathbf{v}_n\in \R^n$ be defined as 
\begin{align}\label{eq:vectors}\mathbf{e}_n=[1,0,\dots,0]^\top, \quad  \mathbf{f}_n=[1,1,\dots,1]^\top,\quad
[\mathbf{v}_n]_k=(-1)^k\frac{ \binom{2n}{n+k}}{\binom{2n}{n}}, \qquad 1\le k\le n.
\end{align}

The following theorem combines the results of  Theorems 4, 14, 15 of \cite{QV2025}.

\begin{theorem}[\cite{QV2025}] \label{thm:correlation}
There is a unique vector-valued solution $\mathbf{q}_n:\CC\to \CC^n$ of the ODE system
\begin{align}\label{eq:ode_q_vec}
     \frac{n}{2} \lambda \mathbf{q}_n'(\lambda) = (i \frac{n}{2}\lambda  \mathbf{B}_n   +\mathbf{A}_n) \mathbf{q}_n(\lambda)+\frac{n+1}{2} \mathbf{e}_n, \qquad \mathbf{q}_n(0)=\mathbf{f}_n
\end{align}
which is an entire function in each coordinate. The coefficients in the Taylor series expansion   $\mathbf{q}(\lambda)=\sum_{j=0}^\infty \mathbf{s}_j \lambda^j$ are defined via the recursion 
\begin{align}\label{eq:q_coeff_1}
    \mathbf{s}_0&=-\frac{n+1}{2}\mathbf{A}_n^{-1}\mathbf{e}_n, \qquad \mathbf{s}_k=i \left(k \mathbf{I}-\tfrac{2}{n} \mathbf{A}_n\right)^{-1} \mathbf{B}_n \mathbf{s}_{k-1}, \qquad k\ge 1.
\end{align}
Moreover, 
\begin{align}
    \rho_{2n}^{(2)}(0,\lambda)=\frac{1}{4\pi^2}\left(1+2\;\mathbf{v}_n^\top\Re \mathbf{q}_n(\lambda)\right).
\end{align}
\end{theorem}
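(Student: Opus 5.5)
The statement collects results of \cite{QV2025}; here is how I would reconstruct the argument. It has two independent parts: an ODE statement (existence and uniqueness of an entire solution, plus the recursion), and the identification of $\rho_{2n}^{(2)}$ with the linear functional $\mathbf v_n^\top\Re\mathbf q_n$. I would do the ODE part directly and import the probabilistic characterization for the second part.

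\emph{ODE part.} First check that $\mathbf s_0=\mathbf f_n$, i.e.\ $\mathbf A_n\mathbf f_n=-\frac{n+1}{2}\mathbf e_n$. This is a row-by-row computation:
\begin{itemize}
\item in row $k$ with $1<k<n$ the entries sum to $-k^2+\tfrac12k(k+n)+\tfrac12k(k-n)=0$;
\item row $1$ gives $-1+\tfrac12(1-n)=-\tfrac{n+1}{2}$;
\item row $n$ gives $-n^2+\tfrac12 n(2n)=0$.
\end{itemize}
Next substitute $\mathbf q=\sum_j\mathbf s_j\lambda^j$ into \eqref{eq:ode_q_vec}. Comparing the $\lambda^0$ coefficients gives $\mathbf A_n\mathbf s_0=-\frac{n+1}{2}\mathbf e_n$. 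Comparing the $\lambda^k$ coefficients gives $\frac n2 k\mathbf s_k=\mathbf A_n\mathbf s_k+i\frac n2\mathbf B_n\mathbf s_{k-1}$, which is \eqref{eq:q_coeff_1}.

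These formulas are legitimate, and determine the series uniquely, provided $\frac2n\mathbf A_n$ has no eigenvalue in $\{0,1,2,\dots\}$. This spectral fact is the main obstacle. I would first try to show that all eigenvalues of $\mathbf A_n$ have negative real part. The plan is to view $\mathbf A_n$ as the matrix of the generator of a diffusion acting on the span of $e^{ik\alpha}$, $k=1,\dots,n$, in the angle variable of the $\Sine_\beta$ phase SDE; it is tridiagonal because the generator maps $e^{ik\alpha}$ into $e^{i(k\pm1)\alpha}$. Decay of the semigroup, which comes from killing/absorption at $k=0$, then forces the spectrum into the open left half-plane. If that fails, I would fall back to computing the characteristic polynomial of $\mathbf A_n$ via the three-term recurrence for its leading minors.

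Given this spectral fact, convergence of the series for every $\lambda$ follows from the recursion: $\|(k\mathbf I-\frac2n\mathbf A_n)^{-1}\|=O(1/k)$, so $\|\mathbf s_k\|\le C^k/k!$ and $\mathbf q_n$ is entire. Uniqueness among entire solutions also follows. The difference of two such solutions solves the homogeneous Fuchsian system, whose local solutions at the regular singular point $0$ have the form $\lambda^{\frac2n\mathbf A_n}\times(\text{analytic})$. Since no exponent is a nonnegative integer, no nonzero solution is analytic at $0$.

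\emph{Correlation formula.} I would use the representation of $\rho_\beta^{(2)}(0,\lambda)$ from \cite{QV2025}. It writes $4\pi^2\rho_\beta^{(2)}(0,\lambda)-1$ as the real part of a characteristic-function-type expectation $\mathbb E\, e^{i\lambda T}$ for a random variable built from the Brownian carousel / hyperbolic phase diffusion. For $\beta=2n$, the function $\lambda\mapsto \mathbb E[e^{i\lambda T}g(\alpha_0)]$, with $g$ ranging over trigonometric polynomials of degree $\le n$, satisfies a closed Feynman--Kac-type system. After a time change this system becomes the scale-invariant equation $\frac n2\lambda\mathbf q'=(i\frac n2\lambda\mathbf B_n+\mathbf A_n)\mathbf q+\frac{n+1}{2}\mathbf e_n$:
\begin{itemize}
\item $\mathbf B_n$ records the frequency $k$;
\item $\mathbf A_n$ is the generator on the $e^{ik\alpha}$ basis;
\item the inhomogeneous term comes from the boundary mode $k=0$.
\end{itemize}
Boundedness of these expectations near $\lambda=0$ singles out the entire solution, so it coincides with $\mathbf q_n$ by the uniqueness above. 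Finally, the initial phase distribution expanded in this basis produces the binomial weights $\mathbf v_n$, which gives the stated formula.
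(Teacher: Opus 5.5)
The paper gives no proof of this statement. It is quoted from \cite{QV2025} as a combination of Theorems 4, 14 and 15 there. For the identity $4\pi^2\rho_{2n}^{(2)}(0,\lambda)=1+2\mathbf v_n^\top\Re\mathbf q_n(\lambda)$ your proposal therefore does what the paper does, namely import it. Your Feynman--Kac paragraph is motivation rather than proof: you never specify $T$ or the diffusion, and the closure you assert is not automatic.

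Your structural picture does match the matrices. If
\[
\mathcal D=2\sin^2(\alpha/2)\,\partial_\alpha^2-n\sin\alpha\,\partial_\alpha ,
\]
then $\mathcal D e^{ik\alpha}=\frac{n+1}{2}\delta_{k,1}+\sum_{j=1}^n[\mathbf A_n]_{k,j}e^{ij\alpha}$ for $1\le k\le n$. However, it is the drift $-n\sin\alpha\,\partial_\alpha$ that makes the coefficient $\frac12k(k-n)$ of $e^{i(k+1)\alpha}$ vanish at $k=n$. The bare phase generator $2\sin^2(\alpha/2)\partial_\alpha^2$ sends $e^{in\alpha}$ out of the span of $1,e^{i\alpha},\dots,e^{in\alpha}$. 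Where that drift comes from is what \cite{QV2025} supplies and your sketch does not. Your remark about $\mathbf v_n$ can be made concrete: $\sin^{2n}(\theta/2)/\bigl(4^{-n}\binom{2n}{n}\bigr)=1+2\sum_{k=1}^n[\mathbf v_n]_k\cos(k\theta)$, so $\mathbf v_n$ encodes the weight $|1-e^{i\theta}|^{2n}$.

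What you add beyond the paper is a self-contained treatment of the analytic part: the check $\mathbf A_n\mathbf f_n=-\frac{n+1}{2}\mathbf e_n$, the recursion, entireness, and uniqueness. This is correct, except that the spectral fact you isolate is not actually established.

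Your semigroup heuristic does not work as stated. The span of $e^{ik\alpha}$, $1\le k\le n$, is not invariant. On the invariant space the semigroup fixes constants rather than decaying. And ``absorption at $k=0$'' treats a Fourier index as a state. A leading-minor recurrence, for its part, does not locate the roots for general $n$.

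A short proof is available from $\mathcal D$. With $x=e^{i\alpha}$,
\[
\mathcal D=\tfrac12(x-1)^2\bigl(x\partial_x^2+\partial_x\bigr)-\tfrac n2(x^2-1)\partial_x
\]
preserves polynomials of degree $\le n$. Its matrix in the basis $1,x,\dots,x^n$ is $\begin{pmatrix}0&\frac{n+1}{2}\mathbf e_n^\top\\ \mathbf 0&\mathbf A_n^\top\end{pmatrix}$, while in the basis $(x-1)^m$ it is triangular:
\[
\mathcal D(x-1)^m=-\tfrac{m(2n+1-m)}{2}(x-1)^m+\tfrac{m(m-n)}{2}(x-1)^{m+1}.
\]
Comparing the two forms, the spectrum of $\mathbf A_n$ is $\{-\frac{m(2n+1-m)}{2}:1\le m\le n\}$; for $n=3$ this is $-3,-5,-6$. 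All of these are negative, so $k\mathbf I-\frac2n\mathbf A_n$ is invertible for every $k\ge 0$, and your argument goes through.

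One small correction: describing local solutions as $\lambda^{\frac2n\mathbf A_n}\times(\text{analytic})$ is not valid when exponents differ by integers, as $-2$ and $-4$ do for $n=3$. You do not need it, though. The recursion itself forces any solution of the homogeneous system that is analytic at $0$ to vanish.
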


We also recall some basic properties of Bessel functions of the first kind. (See e.g.~\cite{AbSt}.)
The Bessel function of the first kind with parameter $\nu\in \CC$ is given by  the series 
\begin{align}\label{eq:Bessel_series}
     J_\nu(\lambda)=\sum_{k=0}^\infty \frac{(-1)^k}{k! \Gamma(k+\nu+1)} (\lambda/2)^{2k+\nu}.
\end{align}
For $\nu=n\in \Z_{<0}$, one has to take the appropriate limits of the coefficients, and we get $J_{n}(\lambda)=(-1)^n J_{-n}(\lambda)$. The function $\lambda^{-\nu} J_{\nu}(\lambda)$ can be extended as an entire function on $\CC$.

If $\nu\notin \Z$ then $J_\nu$ and $J_{-\nu}$ are linearly independent solutions of  Bessel's equation:
\begin{align}\label{eq:ODE_Bessel}
   \lambda^2  u''(\lambda)+\lambda u'(\lambda)+(\lambda^2-\nu^2) u(\lambda)=0.
\end{align}
We record the following Bessel function identities:
\begin{align}\label{eq:Bessel_identities}
        J'_\nu(\lambda)=J_{\nu-1}(\lambda)-\frac{\nu}{\lambda}J_\nu(\lambda)&=-J_{\nu+1}(\lambda)+\frac{\nu}{\lambda}J_\nu(\lambda),
\\
\label{eq:Bessel_Wr}
 J_\nu(\lambda)J'_{-\nu}(\lambda)-J_{-\nu}(\lambda)J'_{\nu}(\lambda)&=-\frac{2\sin(\pi \nu)}{\pi \lambda}.
\end{align}

\section{Proof of Theorem \ref{thm:main}}

\begin{proof}[Proof of Theorem \ref{thm:main}]
We start by applying Theorem \ref{thm:correlation} for $n=3$. We write $\mathbf{q}_3=\mathbf{q}=[q_1, q_2, q_3]^\top$. Then the ODE \eqref{eq:ode_q_vec} can be rewritten as the system 
\begin{align}\notag
    \tfrac{3}{2} \lambda  q_1'(\lambda )&=\left(-1+\tfrac{3}{2} i \lambda\right) q_1(\lambda
   )-q_2(\lambda )+2, \\ \label{eq:q_ODE6}
 \tfrac{3}{2} \lambda  q_2'(\lambda )&=5 q_1(\lambda )+(-4+3 i \lambda ) q_2(\lambda
   )-q_3(\lambda ), \\
 \tfrac{3}{2} \lambda  q_3'(\lambda )&=9 q_2(\lambda )+\left(-9+\tfrac{9}{2} i \lambda
\right) q_3(\lambda ),\notag
\end{align}
with $q_1(0)=q_2(0)=q_3(0)=1$. With $
     \mathbf{v}_3=\mathbf{v}=\left[-\tfrac{3}{4},\tfrac{3}{10},-\tfrac{1}{20}\right]^\top$, we have
    \begin{align}\label{eq:rho6}
           \rho_{6}^{(2)}(0,\lambda)=\frac{1}{4\pi^2}\Re\left(1+2\mathbf{v}^\top\mathbf{q}(\lambda)\right)=\frac{1}{4\pi^2} \Re \left\{1-\tfrac{3}{2}  q_1(\lambda )+\tfrac{3}{5} q_2(\lambda )-\tfrac{1}{10}q_3(\lambda )\right\}.
    \end{align}
From the third and second equations of the system \eqref{eq:q_ODE6} we can express $q_2, q_2', q_1,$  and $q_1'$ in terms of $q_3$ as follows: 
\begin{align}\label{eq:q2_q3}
    q_2(\lambda)&=\tfrac{1}{6} \lambda  q_3'(\lambda )+\left(1-\tfrac{1}{2} i \lambda \right) q_3(\lambda )\\
    q_2'(\lambda)&=\tfrac{1}{6} \lambda  q_3''(\lambda )+\left(\tfrac{7}{6}-\tfrac12 i \lambda \right) q_3'(\lambda )-\tfrac{1}{2} i q_3(\lambda ).
\\
\label{eq:q1_q3}
    q_1(\lambda)&=\tfrac{1}{20} \lambda ^2 q_3''(\lambda )+\left(\tfrac{29  }{60}\lambda-\tfrac14 i \lambda ^2\right) q_3'(\lambda )+\left(-\tfrac{3 }{10}\lambda ^2-\tfrac{23 }{20} i \lambda+1\right) q_3(\lambda ),
\\\notag
    q_1'(\lambda)&=\tfrac{1}{20} \lambda ^2q_3'''(\lambda )+\left(\tfrac{7 }{12} \lambda-\tfrac14 {i \lambda ^2}\right)q_3''(\lambda )+\left(-\tfrac{3 }{10}\lambda ^2-\tfrac{33 }{20}i \lambda +\tfrac{89}{60}\right)q_3'(\lambda )+\left(-\tfrac{3 }{5} \lambda-\tfrac{23}{20}  i\right)q_3(\lambda ).
\end{align}
Substituting the expressions for $q_2, q_1, q_1'$ into the first equation of \eqref{eq:q_ODE6} gives the following third-order ODE for $q_3$:
\begin{align}\label{eq:ODE_q3}
  80&=3 \lambda ^3 q_3'''(\lambda )+\left(37 \lambda ^2-18 i \lambda ^3\right) q_3''(\lambda )+\\&\qquad\left(-33 \lambda ^3-138 i \lambda ^2+115 \lambda \right) q_3'(\lambda )+\left(18 i \lambda ^3-117 \lambda ^2-195 i \lambda +80\right) q_3(\lambda ).\notag
\end{align}
From Theorem \ref{thm:correlation} (using the series representation of $\mathbf{q}(\lambda)$) we  have 
\begin{align*}
    q_3(0)=1, \qquad q_3'(0)=i, \qquad q_3''(0)=-\tfrac{9}{8}.
\end{align*}
Once we have an expression for $q_3$, the equations \eqref{eq:rho6}, \eqref{eq:q2_q3},  \eqref{eq:q1_q3} will lead to an expression for $\rho_{6}^{(2)}(0,\lambda)$.
The key observation in solving the ODE \eqref{eq:ODE_q3} is that it can be reduced to a second-order differential equation. More specifically, we  show that $q_3$ solves 
\begin{align}\label{eq:q3_A}
    3 \lambda ^2 q_3''(\lambda )-(9 i \lambda^2 -19\lambda)   q_3'(\lambda )-\left(6 \lambda ^2+27 i \lambda -20\right) q_3(\lambda )=\tfrac{160}{27 } \lambda^{-4} h(\lambda),
\end{align}
where $h$ is defined in \eqref{eq:def_h_b}, and the right side at $\lambda=0$ is defined as the appropriate limit. Denote the left side of \eqref{eq:q3_A} by $w(\lambda)$. Direct differentiation shows that $\lambda w'(\lambda)+(4-3 i \lambda ) w(\lambda )$ is exactly equal to the expression on the right side of \eqref{eq:ODE_q3}. We also have $w(0)=20q_3(0)=20$. The reduction now follows from the fact that the solution of the ODE 
\begin{align*}
    \lambda w'(\lambda)+(4-3 i \lambda ) w(\lambda )=80, \qquad w(0)=20
\end{align*}
is given by  $w(\lambda)=\tfrac{160}{27 } \lambda^{-4} h(\lambda)$. In Section \ref{sec:reduction} we will  show that a similar reduction holds for the order $n$ ODE associated to $\rho_{2n}^{(2)}(0,\lambda)$ for $n\in \Z_+$.

Introducing $r(\lambda)= \lambda^2 e^{-\frac{3}{2}i\lambda} q_3(\lambda)$, the equation \eqref{eq:q3_A} transforms to 
\begin{align}
   \lambda  r''(\lambda )+\tfrac{7 }{3}r'(\lambda )+\left(\tfrac{\lambda }{4}+\tfrac{i}{2}\right) r(\lambda )= \tfrac{160}{81} e^{-\frac{3}{2}  i \lambda } \lambda^{-3} h(\lambda ), \label{eq:ODE_r}
\end{align}
valid for $\lambda\neq 0$.
Note that $r(\lambda)$ is an entire function with $r(0)=r'(0)=0$.

Consider the homogeneous version of \eqref{eq:ODE_r}, i.e.
\begin{align}\label{eq:ODE_r_hom}
     \lambda  y''(\lambda )+\tfrac{7 }{3}y'(\lambda )+\left(\tfrac{\lambda }{4}+\tfrac{i}{2}\right) y(\lambda )=0.
\end{align}
Direct differentiation, together with \eqref{eq:Bessel_identities}, shows that the ODE \eqref{eq:ODE_r_hom} is solved   by the functions $r_+$ and $r_-$ from   \eqref{eq:r_pm} on  $\lambda\in \CC\setminus (-\infty,0]$. From \eqref{eq:Bessel_Wr} it follows that the Wronskian of these functions is nonzero: 
\begin{align}
W(\lambda):=r_+(\lambda) r'_{-}(\lambda)-r_{-}(\lambda) r'_{+}(\lambda)=\frac{8 i}{3 \pi } \lambda^{-7/3},
\end{align}
hence $r_+$ and $r_-$ are linearly independent.
From \eqref{eq:Bessel_series} and the definitions of $r_{+}, r_{-}, h$ we have the following asymptotics near 0:
\begin{align}\label{eq:asympt_r_h}
    r_{+}(\lambda)=\tfrac{2^{-1/3}}{ \Gamma(7/6)}+\mathcal{O}(\lambda), \quad
r_{-}(\lambda)=- \tfrac{2^{7/3} i}{\Gamma(-1/6)} \lambda^{-4/3}+\mathcal{O}(\lambda^{-1/3}),\quad 
h(\lambda)=\tfrac{27}{8}\lambda^4+\mathcal{O}(\lambda^5).
\end{align}
Using variation of parameters, it follows that  all solutions of 
  \eqref{eq:ODE_r} (hence $r(\lambda)$ as well) can be written as 
\begin{align*}
        c_+r_+(\lambda) + c_- r_-(\lambda) + r_+(\lambda)\int_0^\lambda b(s)  r_{-}(s) ds- r_-(\lambda)\int_0^\lambda b(s)  r_{+}(s) ds.
\end{align*}
with $c_+, c_{-}\in \CC$ and  $b(\lambda)=-\frac{1}{\lambda W(\lambda)}
     \cdot \frac{160}{81} e^{-\frac{3}{2}  i \lambda } \lambda^{-3} h(\lambda )$, agreeing with the definition given in \eqref{eq:def_h_b}. 
  The asymptotics \eqref{eq:asympt_r_h} together with the fact that $r(0)=r'(0)=0$  imply that
\begin{align}\label{eq:r}
    r(\lambda)   &= r_+(\lambda)\int_0^\lambda b(s)  r_{-}(s) ds- r_-(\lambda)\int_0^\lambda b(s)  r_{+}(s) ds,
\end{align}
and that the integrals in \eqref{eq:r} are well-defined. 
From $q_3(\lambda)=\lambda^{-2}  e^{\frac{3}{2}i\lambda} r(\lambda)$, we get 
\begin{align}\label{eq:q3_2}
    q_3(\lambda)&=
    \lambda^{-2}  e^{\frac{3}{2}i\lambda}r_+(\lambda)\int_0^\lambda b(s)  r_{-}(s) ds- \lambda^{-2}  e^{\frac{3}{2}i\lambda}r_-(\lambda)\int_0^\lambda b(s)  r_{+}(s) ds.
\end{align}
This formula holds for $\lambda\in \CC\setminus (-\infty,0]$, in particular for $\lambda\in \R_+$. From \eqref{eq:q2_q3} and \eqref{eq:q1_q3} we get
\begin{align}\notag
    2\mathbf{v}^\top \mathbf{q}(\lambda)&=-\tfrac{3}{2}  q_1(\lambda )+\tfrac{3}{5} q_2(\lambda )-\tfrac{1}{10}q_3(\lambda )\\
    &=-\tfrac{3}{40}  \lambda ^2 q_3''(\lambda )+\left(-\tfrac{5 }{8} \lambda+\tfrac{3 i }{8}\lambda ^2\right) q_3'(\lambda )+\left(\tfrac{9 }{20}\lambda ^2+\tfrac{57 i }{40}\lambda -1\right) q_3(\lambda ).
\end{align}
Using \eqref{eq:q3_A} we can further reduce this to 
\begin{align}\label{eq:qv_2}
   2\mathbf{v}^\top \mathbf{q}(\lambda) =-\tfrac{4 }{27 }\lambda^{-4} h(\lambda )+\left(-\tfrac{3  }{20}\lambda+\tfrac{3 i }{20}\lambda ^2\right) q_3'(\lambda )+\left(\tfrac{3 }{10}\lambda ^2+\tfrac{3 i  }{4}\lambda-\tfrac{1}{2}\right) q_3(\lambda ).
\end{align}
Differentiating \eqref{eq:q3_2} yields
\begin{align*}
    q_3'(\lambda)=&\tfrac1{2} \lambda^{-3} e^{\frac{3 i \lambda }{2}} \left(2 \lambda  r_+'(\lambda )+(-4+3 i \lambda ) r_+(\lambda )\right)\int_0^\lambda b(s)  r_{-}(s) ds\\
    &\qquad -\tfrac1{2} \lambda^{-3} e^{\frac{3 i \lambda }{2}} \left(2 \lambda  r_-'(\lambda )+(-4+3 i \lambda ) r_-(\lambda )\right)\int_0^\lambda b(s)  r_{+}(s) ds.
\end{align*}
Together with \eqref{eq:q3_2} and \eqref{eq:qv_2} we obtain
\begin{align}
    2\mathbf{v}^\top \mathbf{q}(\lambda) =& -\frac{4}{27}\lambda^{-4}h(\lambda) +   \ell_+(\lambda) \int_0^\lambda b(s) r_{-}(s) ds-\ell_-(\lambda)
\int_0^\lambda b(s) r_{+}(s) ds,
\end{align}
with $\ell_{\pm} $ defined in \eqref{eq:def-l-pm}.  The statement of Theorem \ref{thm:main} follows from \eqref{eq:rho6}.\end{proof}

\section{Reduction of the ODE \eqref{eq:ode_q_vec}}\label{sec:reduction}

One of the key steps in the proof of Theorem \ref{thm:main} is the reduction of the third-order ODE \eqref{eq:ODE_q3} to the second-order equation \eqref{eq:q3_A}. We  show that such a reduction holds in general for $\beta=2n$. 
The key is the following lemma. 

\begin{lemma}\label{lem:reduction} Fix $n\in \Z_+$, and 
let $\mathbf{A}_n, \mathbf{B}_n$ be defined as in \eqref{eq:AB}. There is a vector-valued polynomial function $\mathbf{y}_n(\lambda):\CC\to \CC^n$ so that with ${\mathbf{z}}_n(\lambda)=e^{-i n\lambda} \lambda^{n+1}\mathbf{y}_n(\lambda)$  we have
 \begin{align}\label{eq:ODE_z}
     \tfrac{n}{2} \lambda {\mathbf{z}}_n'(\lambda) = -(i \tfrac{n}{2}\lambda  \mathbf{B}_n   +\mathbf{A}_n^\top) {\mathbf{z}}_n(\lambda).
\end{align}
\end{lemma}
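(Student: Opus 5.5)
We look for $\mathbf{y}_n$ directly as a polynomial and translate \eqref{eq:ODE_z} into a finite linear recursion for its coefficients. Substituting $\mathbf{z}_n=e^{-in\lambda}\lambda^{n+1}\mathbf{y}_n$ and using $\tfrac n2\lambda\mathbf{z}_n'=e^{-in\lambda}\lambda^{n+1}\bigl(\tfrac n2\lambda\mathbf{y}_n'+\tfrac{n(n+1)}2\mathbf{y}_n-\tfrac{in^2}{2}\lambda\mathbf{y}_n\bigr)$, the lemma is equivalent to
\begin{align*}
\tfrac n2\lambda\mathbf{y}_n'(\lambda)=-\mathbf{M}_n\mathbf{y}_n(\lambda)+\tfrac{in}{2}\lambda\,(n\mathbf{I}-\mathbf{B}_n)\mathbf{y}_n(\lambda),\qquad \mathbf{M}_n:=\tfrac{n(n+1)}2\mathbf{I}+\mathbf{A}_n^\top .
\end{align*}
Writing $\mathbf{y}_n=\sum_{j=0}^d\mathbf{c}_j\lambda^j$ and comparing coefficients of $\lambda^j$, this becomes
\begin{align*}
\bigl(\mathbf{M}_n+\tfrac n2 j\,\mathbf{I}\bigr)\mathbf{c}_j=\tfrac{in}{2}(n\mathbf{I}-\mathbf{B}_n)\mathbf{c}_{j-1},\qquad 0\le j\le d+1,
\end{align*}
with $\mathbf{c}_{-1}=\mathbf{c}_{d+1}=0$. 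Two features drive the argument. First, $n\mathbf{I}-\mathbf{B}_n$ is diagonal with a single zero, in the last entry. So the top equation $(n\mathbf{I}-\mathbf{B}_n)\mathbf{c}_d=0$ forces $\mathbf{c}_d$ to be a multiple of the last basis vector $\mathbf{u}_n=[0,\dots,0,1]^\top$. Second, $\mathbf{A}_n^\top$ is tridiagonal with nonzero off-diagonal entries except possibly the $(n,n-1)$ entry $\tfrac12 n(n-n)=0$ of $\mathbf{A}_n$. This triangular-type coupling is what should make a descending recursion solvable.

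The plan is to run the recursion \emph{downward}. Set $\mathbf{c}_d=\mathbf{u}_n$ and, for $j=d,d-1,\dots,1$, solve
\begin{align*}
(n\mathbf{I}-\mathbf{B}_n)\mathbf{c}_{j-1}=-\tfrac{2i}{n}\bigl(\mathbf{M}_n+\tfrac n2 j\mathbf{I}\bigr)\mathbf{c}_j
\end{align*}
for $\mathbf{c}_{j-1}$. The first $n-1$ coordinates of $\mathbf{c}_{j-1}$ are then determined, and the last coordinate is free. The equation is solvable exactly when the last coordinate of the right side vanishes. That coordinate is a linear expression in the last two entries of $\mathbf{c}_j$, and we intend to use the free last entry of $\mathbf{c}_j$, chosen one step earlier, to kill it. So at each stage we fix the free parameter of $\mathbf{c}_j$ so that $\mathbf{c}_{j-1}$ exists. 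The final condition is the $j=0$ equation $\mathbf{M}_n\mathbf{c}_0=0$, i.e.\ $\mathbf{c}_0$ must lie in the kernel of $\mathbf{A}_n^\top+\tfrac{n(n+1)}2\mathbf{I}$. This is where the degree $d$ enters: $d$ must be chosen so that the vector produced after $d$ steps lands in that kernel.

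As a preliminary step we will therefore verify that $-\tfrac{n(n+1)}{2}$ is an eigenvalue of $\mathbf{A}_n$. For $n=1$ this holds trivially, since $\mathbf{A}_1=[-1]$. For $n=3$ it is consistent with the homogeneous solution $\lambda^{-4}e^{3i\lambda}$ of $\lambda w'+(4-3i\lambda)w=0$ found in the proof of Theorem \ref{thm:main}. We expect the spectrum of $\mathbf{A}_n$ to be explicit (quadratic in an index), which we would prove by exhibiting eigenvectors with binomial-type entries in the spirit of $\mathbf{v}_n$.

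The main obstacle is the compatibility at the bottom of the recursion: showing that for the correct $d$ (we will compute small cases $n=1,2,3,4$ to guess $d$, likely $d=n$ or $d=\binom{n}{2}$) the downward recursion both stays solvable at each step and terminates in $\ker\mathbf{M}_n$. Rather than fight this abstractly, our first attempt will be to guess closed forms for the entries $[\mathbf{c}_j]_k$ from the small cases. We expect products of binomials, or equivalently components of $\mathbf{y}_n$ that are terminating hypergeometric polynomials in $\lambda$. We would then verify the recursion entrywise: each row of the displayed recursion involves only $[\mathbf{c}_j]_{k-1},[\mathbf{c}_j]_k,[\mathbf{c}_j]_{k+1}$ and $[\mathbf{c}_{j-1}]_k$, so it reduces to a contiguous-relation identity for such polynomials. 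If the closed form proves elusive, the fallback is a Frobenius-type argument. The system is a rank-one irregular perturbation at $\infty$ with a regular singularity at $0$. Its exponents at $0$ are $-\tfrac2n\,\mathrm{spec}(\mathbf{A}_n^\top)$, so $\mathbf{z}_n$ should be the solution with exponent $n+1$. We would then show that the factor $e^{in\lambda}\lambda^{-(n+1)}\mathbf{z}_n$ has only polynomial growth at $\infty$, which forces it to be a polynomial.
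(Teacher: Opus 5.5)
\paragraph{There is a genuine gap: the proposal never shows the coefficient system has a nonzero solution.} Your translation of \eqref{eq:ODE_z} into the recursion $(\mathbf{M}_n+\tfrac n2 j\mathbf{I})\mathbf{c}_j=\tfrac{in}{2}(n\mathbf{I}-\mathbf{B}_n)\mathbf{c}_{j-1}$ is correct, but you stop where the content of the lemma begins. This overdetermined linear system needs a nonzero solution, and the eigenvalue fact you plan to verify does not provide one.

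Carry out your own scheme. The last row of $\mathbf{M}_n+\tfrac n2 j\mathbf{I}$ is $(0,\dots,0,-\tfrac{n-1}{2},\tfrac n2(j+1-n))$. So at level $j=d$ with $\mathbf{c}_d=\mathbf{u}_n$, solvability forces $d=n-1$, not $n$ or $\binom n2$. At $j=n-1$ the last-row constraint then holds automatically, and for $1\le j\le n-2$ the free last entry of $\mathbf{c}_j$ does absorb it. At the bottom, however, only $[\mathbf{c}_0]_n$ is free, the other $n-1$ entries are already fixed, and $\mathbf{c}_0$ must lie in $\ker\mathbf{M}_n$. That kernel is at most one-dimensional, since $\mathbf{M}_n$ is tridiagonal with nonzero off-diagonal entries. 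This leaves $n-2$ scalar compatibility identities. They are vacuous for $n\le 2$, but for $n\ge 3$ they are the entire difficulty, and nothing in your argument controls them.

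Two further problems. First, $[\mathbf{A}_n]_{n,n-1}=n^2\neq 0$; the expression $\tfrac12 n(n-n)$ is the would-be $(n,n+1)$ entry, which lies outside the matrix. So the ``triangular-type coupling'' you invoke is not there. Second, the Liouville fallback only restates the problem. At $\infty$ the $\mathbf{y}$-system has formal solutions behaving like $e^{i(n-k)\lambda}$ times powers of $\lambda$, $k=1,\dots,n$. Polynomial growth of the Frobenius solution in every direction means all its connection coefficients onto the exponentially growing $k<n$ solutions vanish, and you give no tool to prove that.

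\paragraph{The missing ingredient is an explicit solution.} The paper takes
\[
y_k(\lambda)=\frac{(1-n)^{\uparrow k-1}}{k\,(2+n)^{\uparrow k-1}}\,L^{(n+1)}_{k-1}(in\lambda),\qquad 1\le k\le n,
\]
with generalized Laguerre polynomials. Then $y_k$ has degree exactly $k-1$, so $\mathbf{c}_j$ is supported on coordinates $j+1,\dots,n$, consistent with $d=n-1$.

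Written coordinatewise, \eqref{eq:ODE_z} becomes a relation among $\lambda y_k'$, $y_{k-1}$, $y_k$, $y_{k+1}$. The Laguerre three-term recurrence expresses $y_{k+1}$ through $y_k$ and $y_{k-1}$. The identity $xL_k^{(\alpha)\prime}(x)=kL^{(\alpha)}_k(x)-(k+\alpha)L^{(\alpha)}_{k-1}(x)$ expresses $\lambda y_k'$ through $y_k$ and $y_{k-1}$. Substituting both makes the relation hold identically.

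The chosen normalization puts a factor $(k-n)$ in the recurrence for $y_{k+1}$, which makes the truncation $y_{n+1}=0$ consistent. This is where the compatibility your downward recursion left open is actually resolved. Your idea of guessing a terminating hypergeometric closed form and checking contiguous relations points the right way. Until the formula is written down and verified, though, the proposal is a strategy rather than a proof.
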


Let $\mathbf{q}_n(\lambda)=[q_1(\lambda), \dots, q_n(\lambda)]^\top$ be the solution of the ODE system \eqref{eq:ode_q_vec} from Theorem \ref{thm:correlation}. Using Lemma \ref{lem:reduction} we  show that $\mathbf{q}_n(\lambda)$ (and hence $\rho_{2n}^{(2)}(0,\lambda)$) can be expressed in terms of a complex-valued ODE of order $n-1$.
As in the first steps of the proof of Theorem \ref{thm:main}, we can express the functions $q_1(\lambda),\dots, q_{n-1}(\lambda)$ in terms of $q_n(\lambda), q_n'(\lambda), \dots, q_n^{(n-1)}(\lambda)$ from the last $n-1$ equations of the ODE system \eqref{eq:ode_q_vec}. Let $\mathbf{z}_n$  be the vector-valued function from  Lemma \ref{lem:reduction}. We have
\begin{align*}
   {\tfrac{n}{2}\lambda}(\mathbf{z}_n^\top(\lambda) \mathbf{q}_n(\lambda))'&=\mathbf{z}_n^\top(\lambda)\left((i \tfrac{n}{2}\lambda  \mathbf{B}_n   +\mathbf{A}_n) \mathbf{q}_n(\lambda)+\tfrac{n+1}{2} \mathbf{e}_n\right) - \left((i \tfrac{n}{2}\lambda  \mathbf{B}_n   +\mathbf{A}_n^\top) {\mathbf{z}}_n(\lambda)\right)^\top \mathbf{q}_n(\lambda)\\&=\tfrac{n+1}{2}\mathbf{z}_n^\top(\lambda) \mathbf{e}_n,
\end{align*}
and 
\begin{align}\label{eq:zq}
     \mathbf{z}_n^\top(\lambda) \mathbf{q}_n(\lambda)={\tfrac{n+1}{n} }\int_0^\lambda s^{-1} {\mathbf{z}_n^\top(s) \mathbf{e}_n} ds.
\end{align}
Since ${\mathbf{z}}_n(\lambda)=e^{-i n\lambda} \lambda^{n+1}\mathbf{y}_n(\lambda)$ with $\mathbf{y}_n$ a polynomial function, the integral is well-defined. 
Substitute our expressions for $q_1(\lambda),\dots, q_{n-1}(\lambda)$ in terms of $q_n(\lambda), q_n'(\lambda), \dots, q_n^{(n-1)}(\lambda)$ into \eqref{eq:zq}. This leads to the advertised differential equation of order $n-1$ for $q_n$. The coefficient of $q_n^{(n-1)}(\lambda)$ in the equation is nonzero, hence the order is exactly $n-1$. 
We have $q_n(0)=1$, and the Taylor expansion \eqref{eq:q_coeff_1} determines the derivatives of $q_n$ at $0$. 

\begin{proof}[Proof of Lemma \ref{lem:reduction}]
Let $L^{(\alpha)}_k(x), k\ge 0$ denote the generalized Laguerre polynomials corresponding to parameter $\alpha$. For $\alpha>-1$, these are the orthogonal polynomials with respect to the measure $e^{-x} x^\alpha dx$ on $[0,\infty)$. These polynomials can  also be defined via the  recursion
\begin{align}
    &L^{(\alpha)}_0(x)=1, \qquad  L^{(\alpha)}_1(x)=1+\alpha-x\\
    &(k+1)L^{(\alpha)}_{k+1}(x)=(2k+1+\alpha-x)L_k^{(\alpha)}(x)-(k+\alpha)L_{k-1}^{(\alpha)}(x), \quad k\ge 1. \label{eq:Lagg_rec}
\end{align}
They satisfy the identity
\begin{align}\label{eq:Lag_diff}
    x \, \frac{d}{dx} L_k^{(\alpha)}(x)=k L_k^{(\alpha)}(x)-(k+\alpha)L_{k-1}^{(\alpha)}(x), \qquad k\ge 1,
\end{align}
(see e.g.~\cite{AbSt}). Introduce the notation $
    (x)^{\uparrow k}=\prod_{j=0}^{k-1}(x+j)$ for the Pochhammer symbol, 
with the empty product defined as 1. We  show that with 
\begin{align}\label{eq:y_Lagg}
    y_k(\lambda)=\frac{(1-n)^{\uparrow k-1}}{k \,  (2+n)^{\uparrow k-1}} \, L_{k-1}^{(n+1)}(i n \lambda), \qquad 1\le k\le n
\end{align}
the vector-valued polynomial function $\mathbf{y}_n(\lambda)=[y_1(\lambda), \dots, y_n(\lambda)]^\top$ satisfies the requirements of Lemma \ref{lem:reduction}.
Note that we can extend the definition to $k=0$ and $k=n+1$, with $y_0(\lambda)=y_{n+1}(\lambda)=0$. 

Set ${\mathbf{z}}_n(\lambda)=e^{-i n\lambda} \lambda^{n+1}\mathbf{y}_n(\lambda)$, our goal is to prove that \eqref{eq:ODE_z} holds. For this, we  need to show that for $1\le k\le n$ we have
\begin{align}\notag
  0=& n\lambda y_k'(\lambda)
+
\left(n(n+1)-2k^2+in\lambda(k-n)\right)y_k(\lambda)\\
&\quad
+
(k-1)(k-1-n)y_{k-1}(\lambda)
+
(k+1)(k+1+n)y_{k+1}(\lambda)\label{eq:y_zero}
\end{align}
Using \eqref{eq:Lagg_rec} and \eqref{eq:y_Lagg} we get
\begin{align}
   (k+1)(n+k+1)y_{k+1}(\lambda)=(k-n)\left((2k+n-i n \lambda)y_k(\lambda)+(n-k+1)\frac{k-1}{k}y_{k-1}(\lambda)\right). \label{eq:y_first}
\end{align}
From \eqref{eq:Lag_diff} and \eqref{eq:y_Lagg} we obtain
\begin{align}\notag
    \lambda y_k'(\lambda)&=i n \lambda \frac{(1-n)^{\uparrow k-1}}{k \,  (2+n)^{\uparrow k-1}} \, \left(L_{k-1}^{(n+1)}\right)'(i n \lambda)\\ \notag
    &=\frac{(1-n)^{\uparrow k-1}}{k \,  (2+n)^{\uparrow k-1}} \, \left((k-1) L_{k-1}^{(n+1)}(i n\lambda)-(k+n)L_{k-2}^{(n+1)}(in \lambda)\right)\\
    &=(k-1) y_k(\lambda)+(n-k+1)\frac{k-1}{k}  y_{k-1}(\lambda).\label{eq:y_second}
\end{align}
The identities \eqref{eq:y_first} and \eqref{eq:y_second} together imply \eqref{eq:y_zero}, finishing the proof.
\end{proof}

\section{Large $\lambda$ asymptotics}
\label{sec:large_lambda}

The statement of Proposition \ref{prop:rho6_asymptotics} can be obtained with a minor modification of the arguments presented in Section 13.2.3 of \cite{ForBook}, which we   outline here. The expression \eqref{eq:paircorr_For} for $\rho_6^{(2)}(0,\lambda)$ is given by a multidimensional Fourier integral, so its asymptotics as $\lambda\to \infty$ can be obtained using the classical methods of oscillatory integrals (see e.g.~\cite{Wong2001}). As \cite{ForBook} argues, as $\lambda\to \infty$ the main contributions to the integral in \eqref{eq:paircorr_For} come from the neighborhoods of the vertices of the cube $[0,1]^6$. By symmetry, it suffices to identify the  contributions near the vertices $\mathbf{x}_\ell, 0\le \ell\le 3$, where $\mathbf{x}_\ell\in \R^6$ has  $3-\ell$ coordinates that are equal to 0 and $3+\ell$ coordinates that are equal to 1. Section 13.2.3 of \cite{ForBook} shows that the
contributions  of $\mathbf{x}_\ell$ for  $0\le \ell \le 3$ are 
\begin{align}
     c_\ell \, \lambda^{-2 \ell^2/3}\left(\cos (\ell \lambda)+d_\ell \frac{\sin (\ell \lambda)}{\lambda}+\mathcal{O}(\lambda^{-2})\right), 
\end{align}
where  
\begin{align*}
   c_0=1, \qquad d_0=0, \qquad c_1=\frac{2}{9}\Gamma(1/3)^2 , \qquad d_1=-\frac{8}{9}, \\ c_2=\frac{8}{81}\Gamma(1/3)^2, \qquad d_2=\frac{8}{9}, \qquad c_3=\frac{8}{81},\qquad d_3=8.
\end{align*}
Exercise 2 of Section 13.2 of \cite{ForBook} outlines how to obtain the next order term for the contribution of $\mathbf{x}_0$, this identifies the non-oscillatory terms in \eqref{eq:rho6_largelambda} as $1-\tfrac{2}{3\lambda^2}$. To obtain Proposition \ref{prop:rho6_asymptotics}, one needs to identify the next order term in the contribution of $\mathbf{x}_1$ as well, which  can be done in a similar way. The resulting term is $-\Gamma(\tfrac{1}{3})^2 \tfrac{64}{729} \tfrac{\cos \lambda}{\lambda^{8/3}}$ with an error term $\mathcal{O}(\lambda^{-11/3})$, which
completes the proof of Proposition \ref{prop:rho6_asymptotics}. 

We note that  Proposition \ref{prop:rho6_asymptotics} can also be obtained directly from Theorem \ref{thm:main}. Using the Hankel asymptotic expansion of the Bessel function $J_\nu$ (see e.g.~\cite{AbSt}), we can find  approximations of $\ell_{+}(\lambda)$ and $\ell_{-}(\lambda)$ in terms of oscillating terms of orders $\lambda^{-2/3}, \lambda^{-5/3}, \lambda^{-8/3}$ with an error term of $\mathcal{O}(\lambda^{-11/3})$. Using the integral representation 
\begin{equation}
 J_\nu(z)
 =\frac{(z/2)^\nu}{\sqrt\pi\,
 \Gamma(\nu+\frac12)}
 \int_{-1}^1e^{izt}(1-t^2)^{\nu-1/2}\,dt
 \label{eq:poisson-bessel}
\end{equation}
(valid for $\nu>-1/2$)
and Fubini's theorem, we can express $\int_0^\lambda b(s) r_{\pm}(s) ds$ as  one-dimensional oscillatory integrals involving rational and exponential functions, and the exponential integral function $\operatorname{Ei}$.  The asymptotics given in \eqref{eq:rho6_largelambda} can now be obtained by analyzing the contributions of the two end-points in the resulting one-dimensional integral, similar to the multidimensional analysis outlined above.  

Both of the outlined approaches can be used to identify further terms in the large $\lambda$ asymptotics of $\rho_6^{(2)}(0,\lambda)$.


\begin{thebibliography}{10}



\bibitem{AbSt}
Milton {Abramowitz} and Irene~A. {Stegun}.
\newblock {\em {Handbook of mathematical functions with formulas, graphs, and
  mathematical tables.}}
\newblock Wiley, 1984.

\bibitem{AGZ}
Greg~W. Anderson, Alice Guionnet, and Ofer Zeitouni.
\newblock {\em Introduction to random matrices}.
\newblock Cambridge University Press, 2009.

\bibitem{AN2026}
Theodoros Assiotis and Joseph Najnudel.
\newblock Moments of {C$\beta$E} field partition function,
  $\operatorname{Sine}_{\beta}$ correlations and stochastic zeta.
\newblock \url{https://arxiv.org/abs/2602.08739}, 2026.

\bibitem{BEY}
Paul Bourgade, L{{\'a}}szl{{\'o}} Erd{\H{o}}s, and Horng-Tzer Yau.
\newblock Universality of general {$\beta$}-ensembles.
\newblock {\em Duke Math. J.}, 163(6):1127--1190, 2014.

\bibitem{DM2026}
Laure Dumaz and Martin Malvy.
\newblock {Long-Range} {C}orrelation of the {Sine}$_\beta$ point {P}rocess.
\newblock \url{https://arxiv.org/abs/2603.15289}, 2026.

\bibitem{Forrester_1992}
Peter~J. Forrester.
\newblock Selberg correlation integrals and the $1/r^2$ quantum many-body
  system.
\newblock {\em Nuclear Physics B}, 388(3):671--699, 1992.

\bibitem{Forrester_1994}
Peter~J. Forrester.
\newblock Addendum to `{S}elberg correlation integrals and the $1/r^2$ quantum
  many body system'.
\newblock {\em Nuclear Physics B}, 416(1):377--385, 1994.

\bibitem{ForBook}
Peter~J. Forrester.
\newblock {\em Log-gases and random matrices}, volume~34 of {\em London
  Mathematical Society Monographs Series}.
\newblock Princeton University Press, Princeton, NJ, 2010.

\bibitem{Forrester_20212}
Peter~J. Forrester.
\newblock Differential identities for the structure function of some random
  matrix ensembles.
\newblock {\em Journal of Statistical Physics}, 183(2), May 2021.

\bibitem{GIKM2016}
Tamara Grava, Alexander Its, Andrei Kapaev, and Francesco Mezzadri.
\newblock On the {Tracy-Widom}$_\beta$ distribution for $\beta=6$.
\newblock {\em Symmetry, Integrability and Geometry: Methods and Applications},
  November 2016.

\bibitem{IP2020}
Alexander Its and Andrei Prokhorov.
\newblock On the $\beta=6$ {Tracy-Widom} distribution and the second
  {Calogero-Painlev{\'e}} system.
\newblock \url{https://arxiv.org/abs/2010.06733}, 2020.

\bibitem{KS}
Rowan Killip and Mihai Stoiciu.
\newblock Eigenvalue statistics for {CMV} matrices: from {P}oisson to clock via
  random matrix ensembles.
\newblock {\em Duke Math. J.}, 146(3):361--399, 2009.

\bibitem{Nakano_2014}
Fumihiko Nakano.
\newblock Level statistics for one-dimensional {S}chrödinger operators and
  {G}aussian beta ensemble.
\newblock {\em Journal of Statistical Physics}, 156(1):66–93, April 2014.

\bibitem{QV2025}
Yahui Qu and Benedek Valk\'{o}.
\newblock On the pair correlation function of the {S}ine$_\beta$ process.
\newblock \url{https://arxiv.org/abs/2509.15446}, 2025.

\bibitem{Rumanov_2015}
Igor Rumanov.
\newblock Painlev{\'e} representation of {Tracy--Widom}${_\beta}$ distribution
  for $\beta = 6$.
\newblock {\em Communications in Mathematical Physics}, 342(3):843--868,
  November 2015.

\bibitem{BVBV}
Benedek Valk\'o and B\'alint Vir\'ag.
\newblock Continuum limits of random matrices and the {B}rownian carousel.
\newblock {\em Inventiones Math.}, 177:463--508, 2009.

\bibitem{BVBV_sbo}
Benedek Valk{\'o} and B{\'a}lint Vir{\'a}g.
\newblock The {Sine$_\beta$} operator.
\newblock {\em Inventiones mathematicae}, 209(1):275--327, 2017.

\bibitem{Wong2001}
R.~Wong.
\newblock {\em Asymptotic approximations of integrals}, volume~34 of {\em
  Classics in Applied Mathematics}.
\newblock Society for Industrial and Applied Mathematics (SIAM), Philadelphia,
  PA, 2001.

\end{thebibliography}

\def\cprime{$'$}

{{  
\bigskip
  \footnotesize

\noindent Shengqi Qiu, \textsc{Department of Mathematics, University of Wisconsin -- Madison, 480 Lincoln Dr,
Madison, WI 53706},  \texttt{sqiu53@wisc.edu}
\medskip

\noindent Yahui Qu, \textsc{Department of Mathematics, University of Wisconsin -- Madison, 480 Lincoln Dr,
Madison, WI 53706},  \texttt{yqu45@wisc.edu}

  \medskip

\noindent Benedek Valk\'o, \textsc{Department of Mathematics, University of Wisconsin -- Madison, 480 Lincoln Dr,
Madison, WI 53706}, \texttt{valko@math.wisc.edu}
  
  \medskip
  
\noindent Lingfan Yuan, \textsc{Department of Mathematics, University of Wisconsin -- Madison, 480 Lincoln Dr,
Madison, WI 53706},  \texttt{lyuan57@wisc.edu}

  \medskip

\noindent Spencer Venancio, \textsc{Department of Mathematics, University of Wisconsin -- Madison, 480 Lincoln Dr,
Madison, WI 53706},  \texttt{svenancio@wisc.edu}

}}

\end{document}